\newenvironment{myabstract}{\par\noindent
{\bf Abstract . } \small }
{\par\vskip8pt minus3pt\rm}
\newcounter{item}[section]
\newcounter{kirshr}
\newcounter{kirsha}
\newcounter{kirshb}
\newenvironment{enumroman}{\setcounter{kirshr}{1}
\begin{list}{(\roman{kirshr})}{\usecounter{kirshr}} }{\end{list}}
\newenvironment{enumarab}{\setcounter{kirshb}{1}
\begin{list}{(\arabic{kirshb})}{\usecounter{kirshb}} }{\end{list}}
\newtheorem{theorem}{Theorem}[section]
\newtheorem{corollary}[theorem]{Corollary}
\newenvironment{demo}[1]{\noindent{\bf #1.}\upshape\mdseries}
{\nopagebreak{\hfill\rule{2mm}{2mm}\nopagebreak}\par\normalfont}
\theoremstyle{definition}
\newtheorem{example}[theorem]{Example}
\newtheorem{definition}[theorem]{Definition}
\def\C{{\mathfrak{C}}}
\def\Fm{{\mathfrak{Fm}}}
\def\Nr{{\mathfrak{Nr}}}
\def\Fm{{\mathfrak{Fm}}}
\def\A{{\mathfrak{A}}}
\def\B{{\mathfrak{B}}}
\def\C{{\mathfrak{C}}}
\def\D{{\mathfrak{D}}}
\def\Sn{{\mathfrak{Sn}}}
\def\CA{{\bf CA}}
\def\K{{\bf K}}
\def\K{{\bf K}}
\def\(R)RA{{\bf (R)RA}}
\def\Zd{{\mathfrak Zd}}
\def\Co{{\sf Co}}
\def\Ig{{\sf Ig}}
 \def\CA{{\sf CA}}
\def\B{{\sf B}}
\def\G{{\sf G}}
\def\K{{\sf K}}
\def\Nr{{\mathfrak{Nr}}}
\def\Nr{{\mathfrak{Nr}}}
\def\A{{\mathfrak{A}}}
\def\B{{\mathfrak{B}}}
\def\C{{\mathfrak{C}}}
\def\D{{\mathfrak{D}}}
\def\A{{\mathfrak{A}}}
\def\B{{\mathfrak{B}}}
\def\C{{\mathfrak{C}}}
\def\D{{\mathfrak{D}}}
\def\L{{\mathfrak{L}}}
\def\L{{\mathfrak{L}}}
\def\CA{{\bf CA}}
\def\G{{\bf G}}
\title{Representations of bounded distributive lattices as continuous sections of sheaves, with an application to algebraic logic}
\author{Tarek Sayed Ahmed}
\begin{document}
\maketitle

\begin{myabstract} Using Sheaf duality theory of Comer for cylindric algebras, we give a representation theorem of of distributive bounded lattices expanded by modalities (functions distributing over joins)
as the continuous sections of sheaves. 
Our representation is defined via a contravariant functor.
We give applications to many-valued logics logics and  various modifications of  first order logic and multi-modal logic, set in an algebraic framework.
\footnote{Mathematics Subject Classification. 03G15; 06E25

Key words: Algebraic logic, lattices, sheaves, epimorphisms}
\end{myabstract}

\begin{definition} A triple $(X, \leq \tau)$, where

\begin{enumarab}
\item $(X,\leq )$ is a partially ordered set
\item $(X, \tau)$ is a topological space, is called a Priestly space if

(a) $\tau$ is a Stone space,

(b) For any $x,y\in X$ such that $x\nleq y$ there is a downward clopen set $U$ such that $y\in U$ and $x\notin U$.
(Downward here, means that when $u\in U$ and $v\leq u$, then $v\in U$.
\end{enumarab}
\end{definition}
\begin{definition}
\begin{enumarab} 
\item A non-empty subset  $I$ of a partially ordered set $(P, \leq )$ is an ideal if the following conditions hold:

(a) For every $x\in I$, $y \leq x$ implies that $y\in I$ ($I$ is a lower set).

(b) For every $x,y\in L$ there is a $z\in I$ such that  $x\leq z$ and $y\leq z$ ($I$ is a directed set).
\item $I$ as above is a prime ideal if for every elements $x$ and $y$ in $P$, $x\land y\in P$ implies $x\in P$ or $y\in I.$ Here $x\land y$ denotes 
$inf\{x,y\}$; it is maximal if it is not properly contained in any proper ideal.

\item A lattice is simple if has only the universal congruence and the identity one.

\end{enumarab}
\end{definition}
Let $V$ be the class of bounded distributive lattices, and let $L\in V$.
We consider lattices as algebraic structurses $(L, \land , \lor, 0, 1)$.
Let $Spec(L)$ be the set of prime ideals endowed with the Priestly topology which has
a base of the form $N_a=\{P\in Spec(L): a\notin P\}$ and their complements.
This space is called the Priestly space corresponding to $L$, or simply, the Priestly space of $L$.
Let $Pries$ be the category of Priestly spaces, where morphisms are homeomorphisms.
We regard $V$ as a concrete category whose morphisms are algebraic homomorphisms (preserving the operations).

Let $F: V\to Pries$ be the functor that takes $L$ to its Priestly space, with the image of morphisms defined by $F(h(P))=h^{-1}(P)$. 
This is an adjoint situation, 
with its inverse the contravariant functor which assigns to 
to a Priestly space the lattice of clopen downward sets and images of morphisms are given via $\Delta(f)(U)= f^{-1}(U)$.

In algebraic logic quantifier logics, like first order logic and other variants therefore, quantifiers are treated as connectives. 
This has a modal formalism, as well, 
which views quantifies (and their duals) as boxes and diamonds, that is, as a multi -dimensional modal logic.
The algebraic framework of such muti-dimensional modal logics, or briefly multi-modal logics
when their propositional part is classical, is the notion of a Boolean algebra with operators $(BAOs)$.

In this note, we deal with bounded distributive lattices with operators (reflecting quantifiers), denoted by $BLO$s.
This notion covers a plathora of logics starting from many valued logic, 
fuzzy logic, intuitionistic logic, multi-modal logic, different versions (like extensions and reducts) of first order logic.

\begin{definition}

A $BLO$ is an algebra of the form $(L, f_i)_{i\in I}$ where $L$ is a distributive bounded lattice, $I$ is a set (could be infinite) 
and the $f_i$'s are unary operators that preserve order, and joins, and are idempotent $f_if_i(x)=f_i(x)$,  
on $L$, such that $f_i(0)=0$, $f_1(1)=1,$ 
and if $x\in L$, and $\Delta x=\{i\in I: f_i(x)\neq x\}$, then $\Delta( x\lor y)\subseteq \Delta x\cup \Delta y$ 
and same for meets. 
\end{definition}

\begin{definition} Let $\A=(L, f_i)_{i\in I}$ be a $BLO$. Then a subset $I$ of $\A$ is an ideal of $\A$, if $I$ is an ideal of $L$ 
and for all $i\in I$, and all $x\in L$, if $x\in I$, then $f_i(x)\in L$
\end{definition}
What distinguishes the algebraic treatment of logics corresponding to such $BLO$s, is their propositional part; 
it can be a $BL$ algebra, an $MV$ algebra, a Heyting algebra, a Boolean algebra 
and so forth.

Here we represent such structures 
as the continuous sections of sheaves; the representation in this geometric context is implemented by a contravariant functor.

We start by a concrete example addressing variants and extension first order logics. 
The following discussion applies to $L_n$ (first order logic with $n$ variables), $L_{\omega,\omega}$ (usual first order logic), 
rich logics, Keislers logics with and without equality, finitray logics of infinitary
relations; the latter three logics are infinitary extensions of first order logic, 
though the former and the latter have a finitary flavour, because quantification is taken only
on finitely many variables. These logics have an extensive literature in algebraic logic.
Let us start with the concrete example of usual first order logic. $\L_n$ denotes a relational first order language (we have no function symbols)
with $n$ constants, $n\leq \omega,$
and as usual a sequence of variables of order type $\omega$.

\begin{example}

Let $\Sn_{\L_n}$ denote the set of all $\L_n$ sentences, and  fix an enumeration $(c_i: i<n)$ of the constant symbols.
We assume that $T\subseteq  \Sn_{\L_0}$. 
Let $X_T=\{\Delta\subseteq \Sn_{\L_0}: \Delta \text{ is complete }\}$.
This is simply the underlying set of the Priestly  space, equivalently  the Stone space, 
of the Boolean algebra $\Sn_{L_0}/T$. For each $\Delta\in X_T,$ let $\Sn_{\L_n}/{\Delta}$
be the corresponding Tarski-Lindenbaum quotient algebra, which is a (representable) cylindric algebra of dimension $n$. 
The $i$th cylindrifier $c_i$ is defined by 
$c_i\phi/{\Delta}=\exists \phi(c_i|x)$, where the latter is the formula
obtained by replacing the $i$th constant if present by the first variable $x$ not occurring in $\phi$, 
and then applying the existential quantifier $\exists x$.
Let $\delta T$ be the following disjoint union
$\bigcup_{\Delta\in X_T}\{\Delta\}\times Sn_{\L_n}/{\Delta}.$ 
Define the following topologies, on $X_T$ and $\delta T$, respectively.
On $X_{T}$ the Priestly (Stone) topology, and on $\delta_{\Gamma}$ 
the topology with base $B_{\psi,\phi}=\{\Delta, [\phi]_{\Delta}, \psi\in \Delta, \Delta\in \Delta_{\Gamma}\}.$
Then $(X_T, \delta T)$ is a {\it sheaf}, and its dual consisting of the continuous sections, 
$\Gamma(T,\Delta)$, with operations defined pointwise, is actually isomorphic to $\Sn_{\L_n}/T$. 

\end{example}

\begin{example}
By the same token, let $\L$ be the  predicate language for $BL$ algebras, $\Fm$ denotes the set of $L$ formulas, and $\Sn$ denotes the 
set of all sentences (formulas with no free variables). 
This for example includes $MV$ algebras; that are, in turn, algebraisations of many 
valued logics.
Let $X_T$ be the Zarski (equivalently the Priestly) topology on $\Sn/T$ based on $\{\Delta\in Spec(\Sn): a\notin \Delta\}$. 
Let $\delta T=\bigcup _{\Delta\in X_T}\{\Delta\}\times \Fm_{\Delta}$. 
Then again, we have $(X_T, \delta T)$ is a {\it sheaf}, and its dual consisting of the continuous sections with operations defined pointwise, 
$\Gamma(T,\Delta)$ is actually isomorphic to $\Fm_T$. 
\end{example}

This situation is very similar to the one in algebraic geometry of desribing the ring associated with the affine variety 
in terms of the local rings given at at point of the variety.

This needs further clarification. Let us formalize the above concrete examples in an abstract more general setting, that allows further applications.
Let $\A$ be a bounded distributive lattice with extra operations $(f_i: i\in I)$. $\Zd\A$ denotes the distributive 
bounded lattice $\Zd\A=\{x\in \A: f_ix=x,\  \forall i\in I\}$, where the operations are the natural restrictions.(Idempotency of the $f_i$s guarantees 
that this is well defined). 
If $\A$ is a locally finite  algebra of formulas of first order logic 
or predicate modal logic or intiutionistic logic, or any predicate logic where the $f_i$s are interpreted as the existential 
quantifiers, then $\Zd\A$ is the Boolean 
algebra of sentences.

Let $\K$ be class of bounded distributive lattices with extra operations $(f_i: i\in I)$.
We describe a functor that associates to each $\A\in \K$, and $J\subseteq I$, a pair of topological spaces
$(X(\A,J), \delta(\A))=\A^d$, where $\delta(\A)$ has an algebraic structure, as well; in fact it is a subdirect product
of distributive lattices, that turn out to be simple (have no proper congruences)
under favourable circumstances, in which case $\delta(\A)$ is a semi-simple lattice carrying 
a product topology. This pair is  called the dual space of $\A$.
For $J\subseteq I$, let $\Nr_J\A=\{x\in A: f_ix=x \forall i\notin j\}$, with operations $f_i: i\in J$.
$X(\A,J)$ is the usual dual space of $\Nr_J\A$, that is, the set of all prime ideals of the lattice $\Nr_J\A$, 
this becomes a Priestly space (compact, Hausdorff and totally disconnected), when we take the collection of all sets
$N_a=\{x\in X(\A,J): a\notin x\}$, and their complements, as a base for the topology. 

For a set $X$ of an algebra $\A$ we let $\Co^{\A} X$ denote the congruence relation generated by $X$ (in the universal algebraic sense).
This is defined as the intersection of all congruence relations that have $X$ as an equivalence class.
Now we turn to defining the second component; this is more involved. 
For $x\in X(\A,J)$, let $\G_x=\A/\Co^{\A}x$  and 
$\delta(\A)=\bigcup\{\G_x: x\in X(\A)\}.$
This is clearly a disjoint union, and hence 
it can also be looked upon as the following product $\prod_{x\in \A} \G_x$ of algebras. 
This is not semi-simple, because $x$ is only prime,
least maximal in $\Nr_J\A$. 
But the semi-simple case will deserve special attention.

The projection $\pi:\delta(\A)\to X(\A)$ is defined for $s\in \G_x$ by $\pi(s)=x$.Here $\G_x=\pi^{-1}x$ is the stalk over $x$. For $a\in A$, 
we define a function
$\sigma_a: X(\A)\to \delta(\A)$ by $\sigma_a(x)=a/\Ig^{\A}x\in \G_x$. 

Now we define the topology on 
$\delta(\A)$. It is the smallest topology  for which all these functions are open, so $\delta(\A)$ 
has both an algebraic structure and a topological one, and they are compatible.

We can turn the glass around. Having such a space we associate a bounded distributive lattice in $\K$.
Let $\pi:\G\to X$ denote the projection associated with the space $(X,\G)$, built on $\A$.
A function  $\sigma:X\to \G$ is a section of $(X,\G)$ if $\pi\circ \sigma$ is the identity 
on $X$. 

Dually, the inverse construction  uses the sectional functor.
The set $\Gamma(X,\G)$ of all continuous sections of 
$(X,\G)$ becomes a $BLO$ by defining the operations pointwise, recall that $\G=\prod \G_x$ is a product of bounded distributive lattices.

The mapping $\eta:\A\to \Gamma(X(\A,J), \delta(\A))$ defined by $\eta(a)=\sigma_a$ 
is as easily checked  an isomorphism. 
Note that under this map an element in $\Nr_J\A$ corresponds with the characteristic 
function $\sigma_N\in \Gamma(X, \delta)$ 
of the basic set $N_a$.

To complete the definition of the contravariant 
functor we need to define the dual of morphisms. 

Given two spaces $(Y,\G)$ and $(X,\L)$ a sheaf morphism $H:(Y,\G)\to (X,\L)$ is a pair $(\lambda,\mu)$ where $\lambda:Y\to X$ is a continous map
and $\mu$ is a continous map $Y+_{\lambda} \L\to \G$ such that $\mu_y=\mu(y,-)$ is a homomorphism of $\L_{\lambda(y)}$ into $\G_y$.
We consider $Y+_{\lambda} \L=\{(y,t)\in Y\times \L:\lambda(y)=\pi(t)\}$ as a subspace of $Y\times \L$.
That is, it inherits its topology from the product topology on $Y\times \L$.

A sheaf morphism $(\lambda,\mu)=H:(Y,\G)\to (X,\L)$ produces a homomorphism of lattices
$\Gamma(H):\Gamma(X,\L)\to \Gamma(Y,\G)$ the natural way:
for $\sigma\in \Gamma(X,\L)$ define $\Gamma(H)\sigma$ by $(\Gamma(H)\sigma)(y)=\mu(y, \sigma(\lambda y))$ for all $y\in Y$.
A sheaf morphism $h^d:\B^d\to \A^d$ can also be asociated with a homomorphism $h:\A\to \B$. 
Define $h^d=(h^*, h^o)$ where for $y\in X(\B)$, $h^*(y)=h^{-1}\cap Zd\A$ and for $y\in X(\B)$ and $a\in A$
$$h^0(h, a/\Ig^{\A}h^*(y))=h(a)/\Ig^{\B}y.$$


\begin{example} Let $\A=\prod_{i\in I}\B_i$, where $\B_i$ are directly indecomposable $BAO$s. Then
$\Zd\A= {}^I2$ and $X(\A)$ is the Stone space of this algebra.
The stalk $\delta_{M}(\A)$ of $\A^{\delta}$ over $M\in X(\A)$ is the ultraproduct 
$\prod_{i\in I}\B_i/F$  where $F$ is the ultrafilter on $\wp(I)$  corresponding to $M$.
\end{example}

\begin{definition} Let $\A\in \CA_{\omega}$ and $x\in A$. The dimension set of $x$, in symbols $\Delta x$, 
is the set $\{i\in \omega: c_ix\neq x\}.$ Let $n\in \omega$. 
Then the $n$ neat reduct of $\A$ is the cylindric algebra of dimension $n$ 
consisting only of $n$ dimensional elements (those elements such that $\Delta x\subseteq n)$,
 and with operations indexed up to $n$.
\end{definition}
\begin{example}

\begin{enumarab}

\item Let $\A\in \Nr_n\CA_{\omega}$. Then there is a sheaf ${\bf X} =(X, \delta, \pi)$ such that $\A$ is isomorphic to continous sections
$\Gamma(X;\delta)$ of $\bold X.$  Indeed, let $X(\A)$ be the Stone space of $\Zd\A$. Then for any maximal ideal 
$x$ in $\Zd\A$, $\Ig^{\A}(x)$ is maximal in $\Nr_n\A$.
Let $\delta(A)=\bigcup \G_x$, where $\G_x=\A/\Ig^{\A}x$. 
The projection $\pi:\delta(\A)\to X(\A)$ is defined 
for $s\in \G_x$ by $\pi(s)=x$. For $a\in A$, 
we define a function $\sigma_a: X(\A)\to \delta(\A)$ by $\sigma_a(x)=a/\Ig^{\A}x\in \G_x$. 
Then $\pi\circ  \sigma$ is the identity and $\delta(\A)$ has  the smallest topology such that these maps are continuous.
Then $\eta: \A\to \Gamma(X(\A)), \delta)$ defined by
$\eta(a)=\sigma_a$ is the desired isomorphism.

\item Let $\A\in \Nr_n\CA_{\omega}$. For any ultrafilters 
$\mu$ and $\Gamma$ in $\Zd\A$, the map $\lambda:\A/\mu\to \A/\Gamma$ defined via, 
$a/\mu\mapsto a/\Gamma$
maps $\Zd\A$ into $\Zd\A$. (The latter is the set of zero-dimensional elements). 
The dual morphism is $\lambda^d=(\lambda, \lambda^0) :(X_{\Gamma}, \delta(\Gamma))\to (X_{\mu}, \delta(\mu))$, 
is defined by $\lambda(\Delta)=\Delta$ and $\lambda^0(\Delta, (\Delta), a/{\Delta}))=(\Delta, a/\Delta)$.
Thus it is an isompphism from $(X_{\Gamma}, \delta(\Gamma)$ onto the restriction of 
$(X_{\mu}, \delta(\mu))$ to the closed set $ X_{\Gamma}$.
Conversley, every restriction of $(X_{\mu}, \delta(\mu))$ to a closed subset $Y$ of $X_{\mu}$ is up to isomorphism the dual space of 
$\Nr_n\A/F$ for a filter $F$ of $\Zd\A$. For if $\Gamma=\bigcap Y$, then $Y=X_{\Gamma}$ 
since $Y$ is closed and the dual space of $\Nr_n\A/{\Gamma}$ is isomorphic to 
$(Y, \delta(\mu)\upharpoonright Y)$.
\end{enumarab}
\end{example}

For an algebra $\A$ and $X\subseteq \A$, $\Ig^{\A}X$ is the ideal generated by $X$. We write briefly lattice for a $BLO$; 
hopefully no confusion is likely to ensue.
\begin{definition}
\begin{enumarab}
\item  A lattice $L$ is regular if whenever $x$ is a prime ideal in $\Zd L$, then $\Ig^{\A}x$ is a prime ideal in
$\A$.

\item A lattice $L$ is strongly regular, if whenever $x$ is a prime idea in $\Zd \L$, then $\Ig^{\A}x$ is a maximal ideal in $\A$.

\item A lattice $L$ is congruence strongly regular, if whenever $x$ is a prime ideal in $\Zd\L$, then $\Co^{\A}x$ is a maximal congruence of $\A$.
\end{enumarab}
\end{definition}
If $L$ is not relatively complemented, then (2) and (3) above are not equivalent; but if it is relatively complemented then they are equivalent. 
A lattice with the property that every interval is complemented is called a relatively complemented lattice. 
In other words, a relatively complemented lattice is characterized by the property that for every element $a$ in an interval $[c,d]=\{x: c\leq x\leq d\}$
there is an element $b$, such that $a\lor b=d$ and $a\land b=c$. 
Such an element is called a complement; it may not be unique, but if the lattice is bounded then relative complements in $[a, 1]$ are just 
complements, and in case of distributivity such complements are 
unique.
In arbitrary lattices the lattice of ideas may not be isomorphic to the lattice of congruences, the 
following theorem gives a sufficient and necessary condition for this to hold.
The theorem is  a classic due to Gratzer and Schmidt.

\begin{theorem} For the correspondence between congruences and ideals to be an 
isomorphism it is necessary and sufficient that $L$ is distributive, relatively complemented with a minimum $0$.
\end{theorem}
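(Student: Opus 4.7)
The plan is to establish the biconditional by writing down the two natural order-preserving maps between the lattice of ideals and the lattice of congruences, and then pinpointing precisely which hypotheses make them mutually inverse.

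For sufficiency, assume $L$ is distributive, relatively complemented, and has minimum $0$. I would define $\Phi(I) = \Theta(I)$ to be the relation $x \equiv y$ iff there is $i \in I$ with $x \vee i = y \vee i$, and $\Psi(\theta) = [0]_\theta$, the $0$-class. The first routine step is to verify that distributivity makes $\Theta(I)$ a congruence (compatibility with $\wedge$ uses that $(x \vee i) \wedge z = (y \vee i) \wedge z$ distributes), and that $[0]_\theta$ is downward closed (hence an ideal, thanks to $0$). The composite $\Psi \circ \Phi$ is the identity, since $x \in [0]_{\Theta(I)}$ means $x \vee i = i$ for some $i \in I$, i.e.\ $x \leq i \in I$, i.e.\ $x \in I$.

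The heart of sufficiency, and the step where relative complementation is essential, is $\Phi \circ \Psi = \mathrm{id}$. Given $\theta$ and $x \theta y$, I would assume without loss of generality $x \leq y$ (break a general pair into $x \wedge y \leq x$ and $x \wedge y \leq y$), take $z$ to be the relative complement of $x$ in the interval $[0,y]$, so $x \wedge z = 0$ and $x \vee z = y$. Then $z = z \wedge y \;\theta\; z \wedge x = 0$, so $z \in [0]_\theta = \Psi(\theta)$, and the witness $x \vee z = y = y \vee 0$ shows $(x,y) \in \Theta(\Psi(\theta))$. The reverse inclusion $\Phi(\Psi(\theta)) \subseteq \theta$ is automatic from the definition of $\Theta$.

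For necessity I would argue that each of the three hypotheses is forced. The need for $0$ is trivial, since without it the $0$-class has no meaning and ideals are not determined by a bottom coset. To see that distributivity is forced, observe that in a non-distributive lattice $\Phi(I)$ fails substitutivity, and one can exhibit a congruence on $M_3$ or $N_5$ whose $0$-class is $\{0\}$ but which is strictly larger than the identity, breaking injectivity of $\Psi$. To see that relative complementation is forced, consider a three-element chain $0 < a < 1$: the congruence collapsing $a$ and $1$ has $0$-class $\{0\}$, coinciding with the $0$-class of the identity, so $\Psi$ is not injective. The main obstacle will be this necessity half: unlike sufficiency, where the combinatorics of relative complements drives everything, necessity requires constructing, for each missing hypothesis, a congruence or ideal that the correspondence fails to match, and the cleanest route is to quote the Grätzer--Schmidt theorem directly rather than reprove the general non-distributive case.
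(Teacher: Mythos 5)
Your sufficiency argument is sound and in fact more complete than the paper's, which dismisses that direction in one line (``every ideal is a congruence class under at most one congruence relation, and of course under at least one''). The maps $\Phi(I)=\Theta(I)$ and $\Psi(\theta)=[0]_\theta$, the use of distributivity to make $\Theta(I)$ compatible with meets, and the use of the relative complement of $x$ in $[0,y]$ to get $\theta\subseteq\Theta([0]_\theta)$ are exactly the right ingredients.

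The necessity half, however, has genuine gaps. First, your $M_3$ claim is false: $M_3$ is simple, so its only congruences are the identity and the full relation, and there is no congruence on $M_3$ with $0$-class $\{0\}$ strictly larger than the identity; in $M_3$ the correspondence fails because an ideal such as $\{0,a\}$ is not a congruence class at all, i.e.\ surjectivity fails, not injectivity. Second, and more seriously, exhibiting a failure on a copy of $M_3$, $N_5$, or a three-element chain sitting inside $L$ does not produce a failure on $L$ itself, since a congruence of a sublattice need not extend to $L$; the forbidden-sublattice strategy therefore does not prove necessity for a general $L$, and the three-chain example in particular only treats one lattice rather than an arbitrary non-relatively-complemented one. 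The paper's route avoids both problems: surjectivity of $\theta\mapsto[0]_\theta$ means every ideal of $L$ is a congruence class, which is known to force distributivity (this is the external result the paper invokes), and for relative complementation one argues directly in $L$: given $b<a$, the principal congruence $\Theta_{a,b}$ and the congruence determined by its $0$-class $I_{a,b}$ must coincide by injectivity, so $a\equiv b$ modulo the latter, which (via the very description of $\Theta(I)$ you wrote down) yields $v\in I_{a,b}$ with $b\vee v=a$ and $b\wedge v=0$. Finally, your proposed fallback of quoting the Gr\"atzer--Schmidt theorem is circular, since the statement being proved \emph{is} that theorem.
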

\begin{proof} {\bf Sketch} Clearly the ideal corresponding to the identity relation is the $0$ ideal. 
Since every ideal of $L$ is a congruence class under some homomorphism, we obtain distributivity.
To show relative complementedness,  it suffices to show that if $b<a$, then $b$ has a complement in the interval $[0,a]$. 
Let $I_{a,b}$ be the ideal which consists of all $u$ with 
$u\equiv 0(Theta_{a,b})$. $V_{a,b}$ is 
a congruence class under precisely one relation, hence $a\equiv b mod(\Theta[V_{a,b}])$.  Hence for some $v\in I_{a,b}$ we have 
$b\lor v=a$ and $b\land v=0$. 
Conversely, we have every ideal is a congruence class under at most one congruence relation, and of course under at least one.
\end{proof}

In case or relative complementation, we have
\begin{theorem} the following are equivalent
\begin{enumarab}
\item $L$ is strongly regular
\item Every principal ideal of $L$ is generated by a an elemnt in $\Zd L$
\item $\delta(\A)$ is semisimple
\end{enumarab}
\end{theorem}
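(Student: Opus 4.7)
The plan is to prove the three-way equivalence via the cycle (1) $\Rightarrow$ (2) $\Rightarrow$ (3) $\Rightarrow$ (1), exploiting relative complementation throughout and invoking the preceding Gr\"atzer--Schmidt theorem, which under the present hypotheses identifies the lattice of ideals of $L$ with its congruence lattice and thereby matches maximal ideals with maximal congruences.

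I would first dispatch (1) $\Leftrightarrow$ (3), which is essentially formal. By construction, $\delta(L)=\bigcup\{L/\Co^{L}x : x\in X(L)\}$, and $\delta(L)$ is semisimple iff every stalk $L/\Co^{L}x$ is simple, iff every $\Co^{L}x$ is a maximal congruence of $L$. Via Gr\"atzer--Schmidt this is equivalent to every $\Ig^{L}x$ being a maximal ideal of $L$, which is precisely strong regularity.

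For (2) $\Rightarrow$ (1), I fix a prime ideal $x$ of $\Zd L$ and an element $a\notin \Ig^{L}x$, then invoke (2) to write $\Ig^{L}\{a\}=(z_a]$ with $z_a\in \Zd L$. Since $a\leq z_a$ and $a\notin \Ig^{L}x$, also $z_a\notin x$; combining primality of $x$ in $\Zd L$ with relative complementation in $L$ produces $w\in x$ with $z_a\lor w=1$, so $1\in \Ig^{L}(x\cup\{a\})$ and hence $\Ig^{L}x$ is maximal.

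The heart of the argument is (1) $\Rightarrow$ (2). Given $a\in L$, I would take $z:=\bigwedge\{f\in \Zd L : a\leq f\}$ as the candidate generator of $\Ig^{L}\{a\}$ in $\Zd L$; existence of the meet follows from compactness of $X(L)$ and the fact that $\Zd L$ is a bounded distributive sublattice. The inclusion $\Ig^{L}\{a\}\subseteq (z]$ is immediate. For the converse, assume some $b\leq z$ lies outside $\Ig^{L}\{a\}$; use relative complementation in $[0,z]$ to split $z$ into an $a$-part and a disjoint $b$-part, then apply prime-ideal separation in $\Zd L$ to find a prime $x$ containing the $a$-part but not the $b$-part. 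Strong regularity then promotes $\Ig^{L}x$ to a maximal ideal of $L$, and the resulting separation between $a$ and $b$ contradicts maximality. The main obstacle is precisely this implication, where relative complementation in $L$ must be carefully balanced against the primality condition in $\Zd L$; the other two steps are essentially formal consequences of Gr\"atzer--Schmidt and the explicit sheaf construction of $\delta(L)$.
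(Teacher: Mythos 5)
The paper's own ``proof'' of this theorem is the single word ``Easy'', so there is no printed argument to measure yours against; it has to stand on its own. Your decomposition is sensible and two of its three legs are essentially sound. (2)$\Rightarrow$(1) works: the complement $w$ of $z_a$ lies in $\Zd L$ (granting, as the whole section implicitly does, that $\Zd L$ is closed under complementation) and, by primality, in $x$, so $1=z_a\lor w\in \Ig^{L}(x\cup\{a\})$. (1)$\Leftrightarrow$(3) is fine \emph{provided} one reads ``$\delta(\A)$ is semisimple'' as ``every stalk is simple''; note that a product of semisimple algebras is already semisimple, so the literal statement of (3) only yields that each stalk $L/\Co^{L}x$ is semisimple, and you should either flag this reading or argue that a semisimple stalk over a prime $x$ is forced to be simple. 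Also, Gr\"atzer--Schmidt as quoted is a statement about lattices, whereas you need the ideal--congruence correspondence for lattices \emph{with operators}; the extension is routine but should be stated.

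The genuine gap is in (1)$\Rightarrow$(2), exactly where you locate the heart of the matter. The element $z=\bigwedge\{f\in \Zd L: a\le f\}$ need not exist: compactness of $X(L)$ does not make $\Zd L$ complete (any incomplete Boolean algebra has a compact dual space), so your candidate generator may simply not be there; and the subsequent ``split $z$ into an $a$-part and a disjoint $b$-part'' cannot be checked, since those parts need not lie in $\Zd L$ and so prime-ideal separation in $\Zd L$ does not apply to them. The implication is nevertheless true, and the working proof is a separation argument that avoids infima altogether. Put $F_a=\{f\in \Zd L: a\le f\}$, a filter of $\Zd L$, and $I_a=\Ig^{L}\{a\}\cap \Zd L$, an ideal of $\Zd L$; it suffices to show $F_a\cap I_a\neq\emptyset$, since any $z$ in the intersection satisfies $\Ig^{L}\{a\}=(z]$. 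If they were disjoint, a prime ideal $x$ of $\Zd L$ would contain $I_a$ and miss $F_a$. By strong regularity $\Ig^{L}x$ is maximal in $L$, and $\Ig^{L}x$ is just the downward closure of $x$. If $a\in \Ig^{L}x$, then $a\le z$ for some $z\in x$, so $z\in x\cap F_a$, a contradiction; if $a\notin \Ig^{L}x$, maximality gives $1=z\lor b$ with $z\in x$ and $b\in \Ig^{L}\{a\}$, whence $\lnot z\le b$, so $\lnot z\in I_a\subseteq x$ and $1=z\lor\lnot z\in x$, again a contradiction. Rebuilding your (1)$\Rightarrow$(2) along these lines closes the gap.
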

\begin{proof} Easy
\end{proof}

We push the duality a step futher esatablishing a correspondence between open (closed) sets of $BLO$s and open subsets of its dual.
An ideal $I$ in $\A$ is regular if $\Ig^{\A}(I\cap \Zd\A)=I$.
\begin{theorem} There is an isomomorphism between the set of all regular ideals in $\Gamma(X, \delta)$ 
onto the lattice of open subsets of $X.$
\end{theorem}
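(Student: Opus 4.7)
The plan is to transport the statement across the isomorphism $\eta:\A\to\Gamma(X,\delta)$, $a\mapsto \sigma_a$, and then reduce to the classical Priestley/Stone duality between lattice ideals of the underlying lattice $\L$ (namely $\Zd\A$ or $\Nr_J\A$, whose prime ideals form $X$) and open subsets of $X$. The regularity condition $\Ig^{\A}(I\cap \L)=I$ is precisely what makes the assignment $I\mapsto I\cap \L$ a bijection between regular ideals of $\A$ and lattice ideals of $\L$, so the theorem will reduce to this known duality once the translation is in place.

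Concretely, I would define two maps. Given an open set $U\subseteq X$, let $J_U=\{a\in \L:N_a\subseteq U\}$; this is a lattice ideal of $\L$ since $N_{a\lor b}=N_a\cup N_b$ and $N_a\subseteq N_b$ whenever $a\le b$. Put $\Psi(U)=\Ig^{\A}(J_U)$. Conversely, for a regular ideal $I$ of $\A$, put $\Phi(I)=\bigcup_{a\in I\cap \L}N_a$, which is open as a union of basic open sets. That $\Psi(U)$ is itself regular is immediate: with $K=\Psi(U)\cap \L$ one has $J_U\subseteq K\subseteq \Psi(U)$, so $\Psi(U)=\Ig^{\A}(J_U)\subseteq \Ig^{\A}(K)\subseteq \Psi(U)$, forcing equality.

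The heart of the argument is verifying $\Phi\circ\Psi=\mathrm{id}$ and $\Psi\circ\Phi=\mathrm{id}$. The first follows from $\{N_a\}_{a\in \L}$ being a base: any $x\in U$ lies in some $N_a\subseteq U$, so $a\in J_U$ and hence $x\in\bigcup_{a\in J_U}N_a$. The second, by regularity of $I$, reduces to the identity $\Ig^{\A}(J)\cap \L=J$ for any lattice ideal $J$ of $\L$, and this is the main obstacle I anticipate. The proof of the identity exploits the explicit shape of $\Ig^{\A}(J)$: each of its elements is dominated by a finite join of terms $f_{i_1}\cdots f_{i_m}(c)$ with $c\in J$, and for $\L=\Zd\A$ every $f_i$ fixes $c\in J\subseteq \L$ by definition of $\Zd\A$, while for $\L=\Nr_J\A$ the operators indexing $\L$ preserve $\L$; in either case the join collapses to an element of $J$, and downward closure in $\L$ then returns any given $a\in \Ig^{\A}(J)\cap \L$ to $J$.

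Order preservation by $\Phi$ and $\Psi$ is immediate from the definitions, and both sides carry the standard lattice structure (intersections and generated joins on the ideal side, unions and intersections on the topological side), so an order-preserving bijection between them is automatically a lattice isomorphism. Combined with the isomorphism $\eta$, this yields the desired isomorphism between regular ideals in $\Gamma(X,\delta)$ and the lattice of open subsets of $X$.
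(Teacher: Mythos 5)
Your route is genuinely different from the paper's. The paper never leaves the sheaf: it sends a section $\sigma$ to its support $[\sigma]=\{x\in X:\sigma(x)\neq 0_x\}$, sends an open $U$ to $J[U]=\{\sigma\in\Gamma(X,\delta):[\sigma]\subseteq U\}$, and simply declares $J\mapsto U[J]=\bigcup\{[\sigma]:\sigma\in J\}$ to be the inverse, with no verification at all. You instead pull everything back along $\eta$ and route the correspondence through the centre $\Zd\A$, using regularity $\Ig^{\A}(I\cap\Zd\A)=I$ to reduce to the classical ideal/open-set duality. Your version is considerably more detailed and isolates the one identity, $\Ig^{\A}(J)\cap\Zd\A=J$, that both arguments really rest on; for $\Zd\A$ your collapse argument is correct, since every $f_i$ fixes the generators and monotonicity gives $f_i(x)\le f_i(c)=c$, so the generated ideal is just the downward closure of $J$ in $\A$.

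Two caveats. First, the step ``any $x\in U$ lies in some $N_a\subseteq U$'' assumes the sets $N_a$ alone form a base, but the paper takes the $N_a$ \emph{together with their complements} as the base of the Priestley topology; in the non-Boolean case an open set such as $X\setminus N_a$ need not be a union of $N_b$'s, so $\Phi(\Psi(U))$ can be a proper subset of $U$ and your $\Phi\circ\Psi=\mathrm{id}$ fails. You should either restrict to open down-sets or work in the case where $\Zd\A$ is a Boolean algebra (which covers every application in the paper and is evidently what the theorem intends); the paper's own support-based maps have exactly the same defect, so this is as much a flaw of the statement as of your proof, but it should be flagged. Second, your parenthetical treatment of $\L=\Nr_J\A$ does not go through as written: the ideal of $\A$ generated by a subset of $\Nr_J\A$ must be closed under \emph{all} the $f_i$, including those indexed by $J$, and nothing in the paper's axioms for a $BLO$ guarantees that $f_i(c)$ for $i\in J$ is again fixed by the $f_k$ with $k\notin J$; one would need an extra axiom such as $\Delta(f_ix)\subseteq\Delta x\cup\{i\}$. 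Since the paper's definition of a regular ideal is phrased with $\Zd\A$, the clean fix is simply to drop that case.
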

\begin{proof} For $\sigma\in \Gamma(X,\delta),$ let $[\sigma]=\{x\in X: \sigma(x)\neq 0_x\}$. For $U\subseteq X$, let 
$J[U]=\{\sigma\in \Gamma(X, \delta): [\sigma]\subseteq U\}.$ 
Then $J\mapsto U[J]$ is an isomorphism, its inverse is $U[J]=\bigcup\{[\sigma]: \sigma\in J\}.$
\end{proof}

Note that a simple lattice is necessarily strongly regular (and hence regular), but the converse is not true, even in the case of strong regularity.
There are easy examples.
As an application to our duality theorem established above, we show that certain properties can extend from simple structures to 
strongly regular ones. The natural question that bears an answer is how far are strongly regular algebras from simple algebras; 
and the answer is: pretty 
far. 
For example in cylindric algebras any non-complete theory $T$ in a first order language gives 
rise to a strongly regular $\omega$-dimensional algebra, namely, $\Fm_T$, that is not simple.


$ES$ abreviates that epimorphisms (in the categorial sense) are surjective. Such abstract property 
is equivalent to the well-known Beth definability property 
for many abstract logics, including fragments of first order logic, and multi-modal logics. 

In fact, it applies to any algebraisable logic (corresponding to a quasi-variety) regarded
as a concrete category.  This connection was established by N\'emeti. 
As an application, to our hitherto established duality, we have: 

\begin{theorem} Let $V$ be a class of distributive bounded lattices such that the simple lattices in $V$ 
have the amalgamation property $(AP)$.
Assume that there exist strongly regular lattices $\A,\B\in V$ and an epimorphism $f:\A\to \B$ that is not onto.
Then $ES$ fails in the class of simple lattices
\end{theorem}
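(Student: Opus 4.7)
The plan is to construct, out of the hypothesised non-surjective epimorphism $f:\A\to \B$ between strongly regular lattices, a non-surjective epimorphism between simple lattices in $V$; that is precisely what a failure of $ES$ in the class of simple lattices amounts to.

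First I would invoke the duality developed above. Since $\A$ and $\B$ are strongly regular, the characterisation theorem makes the dual sheaves $\delta(\A)$ and $\delta(\B)$ semisimple: every stalk $\G_x=\A/\Co^{\A}x$ and every stalk $\B/\Co^{\B}y$ is simple, and the isomorphisms $\eta$ identify $\A$ with $\Gamma(X(\A),\delta(\A))$ and $\B$ with $\Gamma(X(\B),\delta(\B))$. The morphism $f$ dualises to a sheaf morphism $f^{d}=(f^{*},f^{o}):\B^{d}\to \A^{d}$ whose restriction to the fibre over $y\in X(\B)$ is a stalk homomorphism
\[
\bar{f}_y:\A/\Co^{\A}f^{*}(y)\longrightarrow \B/\Co^{\B}y
\]
between simple lattices in $V$.

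Next I would establish two facts. \textbf{Fact 1}: every $\bar{f}_y$ is an epimorphism. Given parallel homomorphisms $g,h$ out of $\B/\Co^{\B}y$ into a simple $\C$ with $g\circ \bar{f}_y=h\circ \bar{f}_y$, compose with the quotient maps $\pi_y:\B\to \B/\Co^{\B}y$ and $\pi_{f^{*}(y)}:\A\to \A/\Co^{\A}f^{*}(y)$ and use the intertwining relation $\pi_y\circ f=\bar{f}_y\circ \pi_{f^{*}(y)}$ to obtain $(g\circ\pi_y)\circ f=(h\circ\pi_y)\circ f$; since $f$ is an epimorphism this forces $g\circ\pi_y=h\circ\pi_y$, and surjectivity of $\pi_y$ gives $g=h$. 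This step does not use $AP$. \textbf{Fact 2}: some $\bar{f}_y$ is not surjective. Suppose for contradiction that every $\bar{f}_y$ is surjective, and pick $b\in \B\setminus f[\A]$. For each $y\in X(\B)$ there is $a_y\in \A$ with $f(a_y)\equiv b\pmod{\Co^{\B}y}$. Compactness and Hausdorffness of the Priestley space $X(\B)$ reduce this to a finite clopen cover, and then the amalgamation property for simple lattices in $V$ is used to glue the finitely many local witnesses $a_y$ across their overlapping stalks into a single $a\in \A$ whose section $\sigma_{f(a)}$ agrees with $\sigma_b$ everywhere; the representation theorem forces $f(a)=b$, contradicting $b\notin f[\A]$.

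Combining the two facts, for some $y$ the map $\bar{f}_y$ is an epimorphism between simple lattices in $V$ that is not surjective, so $ES$ fails in the class of simple lattices. The principal obstacle is the compactness-plus-$AP$ gluing in Fact 2: one has to convert the algebraic amalgamation hypothesis into a sheaf-theoretic pasting tool compatible with the Priestley topology and the congruence structure of $\delta(\B)$. The remainder of the proof is routine once the duality machinery and the strong-regularity characterisation of semisimple stalks are in hand.
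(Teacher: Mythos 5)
Your Fact 1 is correct and is in substance the paper's argument in a cleaner form: the paper dualizes $f$ to a sheaf monomorphism $H=(h,k)$ and tests the stalk maps against one-point sheaves to see they are epimorphisms, whereas you read the same conclusion off directly from the intertwining relation $\pi_y\circ f=\bar{f}_y\circ\pi_{f^{*}(y)}$ and surjectivity of the quotient maps; note that this step indeed needs no $AP$ and not even $ES$.

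The genuine gap is in Fact 2, precisely at the point you flag as "the principal obstacle". The amalgamation property is not a pasting tool for local sections, and the gluing as you describe it does not produce an element of $\A$: compactness gives you a finite clopen partition $M_1,\dots,M_k$ of $X(\B)$ together with witnesses $a_1,\dots,a_k\in\A$, but the $M_i$ correspond to elements of $\Zd\B$, which need not lie in $f[\A]$, so the pieced-together element $\bigvee_i(e_i\wedge f(a_i))$ lives in $\B$ with no reason to be in the image of $f$. What the paper actually uses $AP$ for is entirely different: given $x,y\in X(\B)$ with $f^{*}(x)=f^{*}(y)$, the two stalk maps out of the common simple stalk over $f^{*}(x)$ are amalgamated (using simplicity of the stalks, i.e.\ strong regularity, plus $AP$ for simple algebras) into a single simple $\D$, and the monomorphism property of the dual sheaf morphism then forces $x=y$. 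Thus $AP$ buys injectivity of the base map $f^{*}:X(\B)\to X(\A)$, and it is \emph{that} injectivity (together with both spaces being Boolean) which lets one transfer the finite clopen cover from $X(\B)$ to a clopen partition of $X(\A)$ and assemble the local data into a genuine section in $\Gamma(X(\A),\delta(\A))\cong\A$ mapping onto $b$. Once you insert this lemma, your contrapositive arrangement (construct a non-surjective epimorphism between simple stalks) becomes logically equivalent to the paper's argument by contradiction (assume $ES$ for simples, upgrade "each stalk map is an epimorphism" to "each stalk map is onto", and conclude $f$ is onto); without it, Fact 2 does not go through.
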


\begin{demo}{Proof} Suppose, to the contrary that $ES$ holds for simple algebras.
Let $f^*:\A\to \B$ be the given epimorphism that is not onto. We assume that $\A^d=(X,\L)$ and $\B^d=(Y,\G)$ 
are the corresponding dual sheaves over the Priestly  spaces $X$ and $Y$ and by  duality that 
$(h,k)=H:(Y,\G)\to (X,\L)$ is a monomorphism. Recall that $X$ is the set of prime ideals in $Zd\A$, and similarly for $Y$.
We shall first prove
\begin{enumroman}
\item $h$ is one to one
\item for each $y$ a maximal ideal in $\Zd\B$, $k(y,-)$ is a surjection of the stalk over $h(y)$ onto the stalk over $y$.
\end{enumroman}
Suppose that $h(x)=h(y)$ for some $x,y\in Y$. Then $\G_x$, $\G_y$ and $\L_{hx}$ are simple algebra, 
so there exists a simple $\D\in V$ and monomorphism $f_x:\G_x\to \D$ and $f_y:\G_y\to \D$ such that
$$f_x\circ k_x=f_y\circ k_y.$$
Here we are using that the algebras considered are strongly regular, and that the simple algebras have $AP$.
Consider the sheaf $(1,D)$ over the one point space $\{0\}=1$ and sheaf morphisms 
$H_x:(\lambda_x,\mu):(1,D)\to (Y,\G)$ and $H_y=(\lambda_y, v):(1,D)\to (Y,\G)$
where $\lambda_x(0)=x$ $\lambda_y(0)=y $ $\mu_0=f_x$ and $v_0=f_y$. The sheaf $(1,\D)$ is the space dual to $\D\in V$
and we have $H\circ H_x=H\circ H_y$. Since $H$ is a monomorphism $H_x=H_y$ that is $x=y$.
We have shown that $h$ is one to one.
Fix $x\in Y$. Since, we are assuming that  $ES$ holds for simple algebras of $V,$ in order to show that 
$k_x:\L_{hx}\to \G_x$ is onto, it suffices to show that $k_x$ is an epimorphism.  
Hence suppose that $f_0:\G_x\to \D$ and $f_1:\G_x\to \D$ for some simple $\D$ such that $f_0\circ k_x=f_1\circ k_x$.
Introduce sheaf morphisms 
$H_0:(\lambda,\mu):(1,\D)\to (Y,\G)$ and $H_1=(\lambda,v):(1,\D)\to (Y,\G)$
where $\lambda(0)=x$, $\mu_0=f_0$ and $v_0=f_1$. Then $H\circ H_0=H\circ H_1$, 
but $H$ is a monomorphism, so we have $H_0=H_1$ from which 
we infer that $f_0=f_1$. 

We now show that (i) and (ii) implies that $f^*$ is onto, which is a contradiction.
Let $\A^d=(X,\L)$ and $\B^d=(Y,\G)$. It suffices to show that $\Gamma((f^*)^d)$ is onto (Here we are taking a double dual) . 
So suppose $\sigma\in \Gamma(Y,\G)$. For each $x\in Y$, 
$k(x,-)$ is onto so $k(x,t)=\sigma(x)$ for some $t\in \L_{h(x)}$. That is $t=\tau_x(h(x))$ for some 
$\tau_x\in \Gamma(X,\G)$. Hence there is a clopen neighborhood $N_x$ of $x$ such that 
$\Gamma(f^*)^d)(\tau_x)(y)=\sigma(y)$ for all $y\in N_x$. 
Since $h$ is one to one and $X,Y$ are Boolean spaces, we get that $h(N_x)$ is clopen in $h(Y)$ and there is a 
clopen set $M_x$ in $X$ such that $h(N_x)=M_x\cap h(Y)$. Using compactness, there exists a partition of
$X$ into clopen subsets $M_0\ldots M_{k-1}$ and sections $\tau_i\in \Gamma(M_i,L)$ such that
$$k(y,\tau_i(h(y))=\sigma(y)$$ 
wherever $h(x)\in M_i$ for $i<k$. Defining 
$\tau$ by $\tau(z)=\tau_i(z)$ whenever $z\in M_i$ $i<k$, it follows that $\tau\in \Gamma(X,\L)$ and $\Gamma((f^*)^d)\tau=\sigma$.
Thus $\Gamma((f^*)^d)$ is onto $\Gamma(\B^d)$, and we are done. 
\end{demo}
And as an application, using known results, we readily obtain:

\begin{corollary}
\begin{enumarab}
\item Epimorphisms are not surjective in simple cylindric algebras, quasipolyadic algebras and Pinters algebras of infinite dimension
\item Epimorphisms are not surjecive in simple cylindric lattices of infinite dimension
\end{enumarab}
\end{corollary}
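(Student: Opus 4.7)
The plan is to derive both items directly from the preceding theorem by verifying, for each listed class, (a) the amalgamation property for its simple members, and (b) the existence of a non-surjective epimorphism between two of its strongly regular members.

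First I would set $V$ to be, in turn, the class of representable cylindric algebras, quasipolyadic algebras, and Pinter's substitution algebras of infinite dimension $\alpha$. Condition (a) is classical: the simple members of infinite dimension in each of these varieties are, up to isomorphism, the full set algebras associated with a single base model, so amalgamation follows from an elementary pushout in model theory. For (b), I would invoke the known failure of the Beth definability property for $L_\alpha$ and its polyadic and substitution analogues, due to the Andr\'eka--N\'emeti--Sayed Ahmed line of work. Via N\'emeti's bridge theorem identifying Beth definability with $ES$ on the algebraic side, this failure translates into an epimorphism $f^*:\A\to \B$ in the corresponding algebraic class that is not onto. Taking $\A=\Fm_{T_1}$ and $\B=\Fm_{T_2}$ for suitable incomplete theories $T_1\subseteq T_2$ witnessing the Beth failure, one gets strongly regular algebras, since, as noted in the text, $\Fm_T$ is strongly regular for any non-complete $T$. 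The preceding theorem then yields that $ES$ fails among the simple members, which is exactly item~(1).

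Item~(2) follows by the same recipe with $V$ taken to be the class of cylindric lattices of infinite dimension, that is, the $(\land,\lor,0,1,c_i)$-reducts of representable cylindric algebras; this is a class of $BLO$s to which the theorem applies. Hypothesis (a) is again witnessed by the single-model algebras, and hypothesis (b) is obtained by transferring the Boolean Beth-failure of item~(1) to the lattice setting: the epimorphism from (1) remains an epimorphism after forgetting complementation, and non-surjectivity is inherited.

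The main obstacle is the verification step in (b), in two respects. First, one needs that the Beth-failure examples in the literature really do land in the strongly regular fragment rather than in some neighbouring subclass; I would address this by taking the examples in the specific form $\Fm_T$ for incomplete $T$, where strong regularity is direct from the construction. Second, for item~(2), one must confirm that the lattice amalgamation property for simple members is available, and that forgetting complementation preserves the epi property; I would deduce the former from the Boolean $AP$ by noting that on simple members the Boolean structure is determined by the lattice-with-operators structure, and the latter by the direct check that any two lattice homomorphisms on $\B$ agreeing on the image of $f^*$ must agree on all Boolean combinations, hence on all of $\B$.
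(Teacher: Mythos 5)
For item (1) your strategy coincides with the paper's: both feed a known non-surjective epimorphism between strongly regular algebras into the preceding theorem, and the examples you describe (Beth-definability failures recast algebraically via N\'emeti's bridge theorem) are exactly the constructions the paper cites from \cite{AUU}. One caveat: your verification of hypothesis (a) rests on the claim that the simple algebras of infinite dimension in these varieties are, up to isomorphism, the full set algebras over a single base. That is false --- any subalgebra of such a set algebra with square unit is simple, and $\Fm_T$ for a complete theory $T$ is simple without being a full set algebra. The amalgamation property for the relevant simple algebras does have to be imported from the literature (the paper tacitly does the same under ``using known results''), but it cannot be obtained from that characterization.

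The serious problem is item (2). You propose to transfer the example of item (1) to cylindric lattices by ``forgetting complementation,'' asserting that the epimorphism survives the passage to reducts. That is the wrong direction: the lattice category has strictly more test objects (cylindric lattices that are not complemented) and, a priori, more test morphisms, so being an epimorphism there is a \emph{stronger} condition, not one inherited automatically by reducts. Your proposed justification --- that two lattice homomorphisms agreeing on the image of $f^*$ agree on all Boolean combinations and hence on all of $\B$ --- silently assumes that the image of $f^*$ generates $\B$, which is precisely what fails for a non-surjective epimorphism; if the image generated the codomain there would be nothing to prove. Even the partial rescue (a bounded lattice homomorphism out of a Boolean algebra automatically preserves complements, by uniqueness of complements in distributive lattices) only places $g(\B)$ and $h(\B)$ inside the center of the target lattice, and one still has to produce a single cylindric \emph{algebra} in the original variety containing both images and closed under the cylindrifications before the Boolean epi property can be invoked; that step is not addressed. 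The paper does not attempt this transfer at all: for item (2) it appeals to a separate, dedicated construction of two strongly regular cylindric lattices whose inclusion is a non-surjective epimorphism in the lattice category. Without such a construction, or a genuine proof that the Boolean example remains an epimorphism among cylindric lattices, your item (2) is not established.
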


\begin{proof} (1) Cf. \cite{AUU} where two strongly regular algebras $\A\subseteq \B$ 
are constructed such that the inclusion is an epimorphism that is not 
surjective. 

(2) In a preprint of ours two strongly regular algebras $\A\subseteq \B$ 
are constructed, and the inclusion is not an epimorphism
\end{proof}
There is a very thin line between superamalgamation $(SUPAP)$ and strong amalgamation $(SAP)$. 
However, Maksimova and Shelah constructed varieties of $BAO$s with $SAP$ but not $SUPAP$, the latter is a variety of representable cylindric 
algebras. The second item of the next corollary makes one cross this line.
\begin{corollary}
\begin{enumarab}
\item  Let $V$ be a variety of $BAO$s such that every semisimple algebra is regular. Then if $ES$ holds for simple algebras, the it holds for 
semisimple algebras.
\item Let $V$ be a variety that has the strong amalgamation property, such that the simple algebras have $ES$. 
Then $V$ has the superamalgamation property.
\end{enumarab}
\end{corollary}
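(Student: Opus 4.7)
The plan is to derive both items from the main theorem on the failure of $ES$, using the sheaf duality to transfer information between simple and semisimple algebras in $V$.

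For (1) I would argue by contraposition. Suppose $f : \A \to \B$ is an epimorphism in $V$ between semisimple algebras that is not surjective. Since $V$ consists of $BAO$s, every prime ideal in the underlying Boolean reduct is maximal, so the regularity hypothesis automatically upgrades to strong regularity for both $\A$ and $\B$; alternatively, this follows from the earlier characterisation of strong regularity via semisimplicity of $\delta(\A)$, together with the fact that every Boolean lattice is relatively complemented. The main theorem, applied under the standing assumption that simple algebras in $V$ enjoy $AP$, then produces a failure of $ES$ among simple algebras, contradicting the hypothesis. Hence $f$ must be surjective, and $ES$ holds for semisimple algebras.

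For (2) I would reduce the super-amalgamation property to strong amalgamation together with $ES$ at the level of simple stalks. Given embeddings $f_i : \A_0 \to \A_i$, strong amalgamation supplies a $\D \in V$ with embeddings $g_i : \A_i \to \D$ and $g_1(\A_1) \cap g_2(\A_2) = g_1 f_1(\A_0)$. To verify the super-condition, fix $(x_1, x_2) \in \A_1 \times \A_2$ with $g_1(x_1) \leq g_2(x_2)$; the goal is to produce $x_0 \in \A_0$ with $x_1 \leq f_1(x_0)$ and $f_2(x_0) \leq x_2$. Passing to the dual sheaf $\D^d = (X(\D), \delta(\D))$, the amalgamation diagram restricts over each prime of $\Zd\D$ to an amalgamation of simple quotients; by $ES$ for simple algebras (the main theorem applied locally, as in (1)), each restricted diagram satisfies the super-condition and yields a candidate bridging element in the appropriate simple slice of $\A_0$. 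Pulling these back through the isomorphism $\eta : \A_0 \to \Gamma(X(\A_0), \delta(\A_0))$ is meant to produce the required global $x_0$.

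The chief obstacle is the global assembly of these local bridges. Each local choice is determined only within a clopen neighbourhood of the corresponding prime in $X(\A_0)$, and coherence must be enforced across overlapping neighbourhoods; a compactness argument on the Priestly space of $\A_0$, refining an open cover to a finite clopen partition on which the local interpolants stabilise, is the right tool. This mirrors the finite-partition step at the end of the proof of the main theorem: once the partition is in hand, the pointwise-defined section is continuous, lies in $\Gamma(X(\A_0), \delta(\A_0))$, and its $\eta$-preimage supplies the desired $x_0$, completing the verification of $SUPAP$.
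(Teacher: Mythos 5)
Your route to part (2) diverges sharply from the paper's, and it is the part with real problems. The paper's proof of (2) is a one-line reduction: it invokes the known equivalence (in the algebraizable-logic setting, due to N\'emeti) that $SAP$ together with $ES$ is equivalent to $SUPAP$. If $SUPAP$ failed then, since $SAP$ holds by hypothesis, $ES$ would fail in $V$; the machinery of the main theorem then pushes that failure down to the simple algebras, contradicting the hypothesis. You never use this equivalence and instead try to manufacture the interpolant $x_0$ directly by local-to-global gluing on the dual sheaf. Three steps of that construction do not go through as written. First, part (2) carries no regularity or semisimplicity hypothesis on the algebras $\A_0,\A_1,\A_2,\D$, so the stalks $\G_x=\A/\Co^{\A}x$ over primes of $\Zd\D$ need not be simple; your claim that the amalgamation diagram ``restricts over each prime to an amalgamation of simple quotients'' is exactly where regularity would be needed and is unavailable. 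Second, the assertion that $ES$ for simple algebras yields the super-condition on each restricted diagram is not a local computation: it is an instance of the very equivalence ($ES$ plus amalgamation gives interpolation) that the paper cites as a known theorem; you are assuming the conclusion locally. Third, even granting local interpolants, the gluing is not justified: the inequalities $x_1\leq f_1(x_0)$ and $f_2(x_0)\leq x_2$ must be verified stalkwise in $\D$, while your clopen partition lives on $X(\A_0)$, and you have not related the primes of $\Zd\D$ to those of $\Zd\A_0$, $\Zd\A_1$, $\Zd\A_2$ so as to make the glued section an interpolant globally.

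On part (1), the paper offers no proof at all, so there is nothing to compare against; your contrapositive argument via the main theorem is the natural one, but note that it silently imports the hypothesis that the simple algebras of $V$ have $AP$ (needed in the main theorem to show $h$ is injective), which is not among the stated hypotheses of item (1). You should either add that hypothesis or find a substitute for the injectivity argument.
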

\begin{proof} We only prove the second part. If $SUPAP$ fails in $V$, then $ES$ does, because $V$ has $SAP$ 
and both together are equivalent to $SUPAP$, but then $ES$ fails in simple algebras 
and this is a contradiction.
\end{proof}

It is known that $ES$ fails for semisimple cylindric algebras of infinite dimension.
In view of the first part of the previous corollary, the next example gives a necessary condition for this. 
But first a definition.  An epimorphism $f:\A\to \B$ is said to be conformal if 
$f(\Zd\A)\subseteq \Zd\B$.

\begin{example}
Let $\C$ be a subdirectly indecomposable cylindric algebra of dimension $\alpha$. Let $I$ be the set of all finite subsets 
of subsets of $\alpha$. Let $F$ be an ultraflier on $I$ such that 
$X_{\Gamma}=\{\Delta\in I:\Gamma\subseteq \Delta\}\in F$ for all $\Gamma\in I$. Then the epimorphism ${}^I\C/ \to  {}^I \C/F$ induced by $F$ 
is not conformal.
\end{example}
The above example actually shows that semisimple algebras need not be regular, and moroever the stalks of the dual space of a semisimple algebra may not 
be even subdirectly indecomposable.


\end{document}